\documentclass[11pt]{amsart}

\usepackage {amsfonts}
\usepackage[english]{babel}
\usepackage[cp1251]{inputenc}

\usepackage{amsmath}
\usepackage{amssymb}

\def\g{\gamma}
\def\G{\Gamma}
\def\d{\delta}

\def\r{\rho}
\def\p{\varphi}
\def\e{\varepsilon}
\def\l{\lambda}
\def\L{\Lambda}
\def\o{\omega}
\def\O{\Omega}
\def\t{\theta}

\def\s{\sigma}

\def\D{\mathcal D}

\def\R{{\mathbb R}}
\def\C{{\mathbb C}}
\def\N{{\mathbb N}}
\def\Z{{\mathbb Z}}
\def\SH{\mathcal S_H}
\def\Re{\text Re\,}
\def\Im{\text Im\,}

\DeclareMathOperator{\supp}{supp}

\newtheorem{Th}{Theorem}
\newtheorem*{Pro}{Proposition}

\begin{document}

\title{Some properties of Fourier quasicrystals and measures on a strip}
\author{Sergii  Favorov, \"Ozkan De\v{g}er}

\address{Sergii Favorov,
\newline\hphantom{iii}  V.N.Karazin Kharkiv National University, Department of Pure Mathematics
\newline\hphantom{iii} Svobody sq., 4, Kharkiv, Ukraine 61022}
\email{sfavorov@gmail.com}

\address{\"Ozkan De\v{g}er,
\newline\hphantom{iii} Istanbul University
\newline\hphantom{iii}  Department of Mathematics, Faculty of Science, Istanbul, T\"urkiye, 34134}
\email{ozdeger@istanbul.edu.tr}

\maketitle{\small
\begin{quote}
\centerline {\bf Abstract.}

 In our paper we extend some results of the theory of Fourier quasicrystals on the real line to a horizontal strip  of finite width. For measures in a strip we use a natural generalization  of the usual Fourier transform
 for measures on the line. We consider positive or translation bounded measures $\mu$ on a strip whose Fourier transform is a pure point measure $\hat\mu=\sum_{\g\in\G}b_\g\d_\g$ (as usual, $\d_\g$ is the unit mass at the point $\g$).
 We prove that  the measure  $\nu=\sum_{\g\in\G}|b_\g|^2\d_\g$ has the exponential growth. Moreover, if for some $\eta>0$ the points of $\G$ in every interval of length  $\eta$ are linearly independent over integers, then the measure $\hat\mu$ also has the exponential growth.
\bigskip

AMS Mathematics Subject Classification:  52C23, 42A38, 42A75

\medskip
\noindent{\bf Keywords:  tempered distribution, Fourier quasicrystal in the horizontal strip, c-Fourier transform of a measure in a strip, pure point spectrum}
\end{quote}
}

   \bigskip
\section{Introduction}\label{S1}
   \bigskip

 A complex measure $\mu$ on a Euclidean space  with discrete locally finite support  is called {\it crystalline},
 if $\mu$ is a tempered distribution and its Fourier transform in the sense of distributions $\hat\mu$
 is also a measure with discrete locally finite support; $\mu$ is called a {\it Fourier quasicrystal} if both measures $|\mu|$ and $|\hat\mu|$ are also tempered distributions.
 Here and below  $|\nu|(E)$ denotes the variation of the complex measure $\nu$ on the set $E$.

 Fourier quasicrystals and crystalline measures have been studied very actively in the recent years. Many works are devoted to  investigations their properties (see, for example, the  papers \cite{M4,LT,AKV}). In fact,  Fourier quasicrystals are  forms of Poisson formulas, which were used, in particular, by D. Radchenko and M. Viazovska in \cite{RV}.

 The first nontrivial example of Fourier quasicrystals, which  are sums of  unit masses
\begin{equation}\label{a}
\mu=\sum_{\l\in\L}\d_\l,\quad \l\in\R,
\end{equation}
 was found by P.Kurasov and P.Sarnak \cite{KS}. In \cite{OU}, \cite{OU1} A. Olevskii and A. Ulanovskii  established a 1-1 connection between such Fourier quasicrystals
 and sets of zeros of real-rooted exponential polynomials. In \cite{F6} this result was generalized   to the zero sets of real-rooted absolutely convergent Dirichlet series. F. Goncalves \cite{G} and F. Goncalves, G. Vedana \cite{GV} studied arbitrary measures $\mu$ on $\R$  with pure point measures $\hat\mu$ and obtained an analog of Olevskii and Ulanovskii's result.

 Measures of the form \eqref{a} with support $\L$ in a horizontal strip of finite width appeared in \cite{F7}, where an analog of Olevskii and Ulanovskii's result was found. Also in \cite{F9}, the  technique of Fourier quasicrystals in a strip was used to provide a criterion for an entire almost periodic function to be a finite product of sines.

 Returning to the definition of a Fourier quasicrystal, it is natural to ask whether we need additional conditions on the variations
of the measures $\mu$ and $\hat\mu$ in order to prove that a crystalline measure is a Fourier quasicrystal.  The answer is positive;  in the paper \cite{F5} a crystalline measure on $\R$ was constructed
that is not a Fourier quasicrystal.  In the paper \cite{BF} some conditions were found for a crystalline measure to be a Fourier quasicrystal. In particular, it was proved that
if a  measure $\mu$ on $\R$ is positive or translation bounded and its Fourier transform $\hat\mu$ is a pure point measure
\begin{equation}\label{b}
\hat\mu=\sum_{\g\in\G}b_\g\d_\g,
\end{equation}
then the measure
$$
\nu=\sum_{\g\in\G}|b_\g|^2\d_\g
$$
is also translation bounded, and if the points of $\G$ in every interval of length $\eta$ are linearly independent over $\Z$, then  $\hat\mu$ is  also translation bounded.

The set $\G$ in \eqref{b} is called {\it spectrum} of the measure $\mu$. Recall also that a measure on $\R$ is {\it translation bounded} if its variations on intervals of length $1$ are uniformly bounded.

In our paper we show that analogs of these results do not valid for measures on a horizontal strip of finite width. Nevertheless we prove that for non-negative or translation bounded measures $\mu$ in a strip
the measures $\nu$ as above and the measure $\hat\mu$ in the case of locally independent set $\G$ have at most exponential growth.
The precise formulations  are presented in  Section \ref{S2}  after giving necessary definitions. An example of calculating the Fourier transform of a measure in a strip is given in Section \ref{S3}. In Sections \ref{S4} and \ref{S5}, we give some auxiliary results. In Section \ref{S6}, we present the proofs of the main theorems.

\bigskip
\section{Necessary notions and formulations of the main results}\label{S2}
\bigskip

Let $\mu$ be a measure with support in $\SH:=\{z\in\C:\,|\Im z|\le H\}$.
Suppose that
\begin{equation}\label{lo}
|\mu|(\{z\in\SH:\,|\Re z|\le r\})\le C_0\max\{1,r^\r\}
\end{equation}
with some positive constants $C_0,\ \r$.
We will say that the measure $\mu$ is {\sl translation bounded} on $\SH$ if
\begin{equation}\label{tr}
  \sup_{s\in\R}|\mu|(\{z\in\SH:\,s\le\Re z\le s+1\})=C_1<\infty.
\end{equation}
It is easy to check that inequality \eqref{lo}   with $\r=1,\ C_0\le3C_1$ is valid for translation bounded measures.

 Set for a  function $\p\in L^1(\R)$ with compact support
 \begin{equation}\label{f}
   \hat\p^c(z):=\int_{\R}\p(t)e^{-2\pi i zt}dt,\qquad z\in\C.
\end{equation}
Clearly, $\hat\p^c(z)$ is an entire extension of the usual Fourier transform
$$
   \hat\p(x):=\int_{\R}\p(t)e^{-2\pi i xt}dt,\qquad x\in\R,
$$
and
$$
   \hat\p^c(z)=\widehat{(\p(t)e^{2\pi yt})}(x),\quad z=x+iy.
$$
Define {\it the c-Fourier transform} of a measure $\mu$  by the equality
\begin{equation}\label{h}
(\hat\mu^c,\p)=\int\hat\p^c(z)\mu(dz),
\end{equation}
where $\p$ belongs to the  space $\D$ of all $C^\infty$-functions on $\R$ with compact support.
Below we will show that  $\hat\mu^c$ is well-defined as a continuous linear functional on $\D$.
We will also show there that the c-Fourier transform of the measure $\mu_0=\sum_{n\in\Z}\d_{n+i}$ is the measure $\hat\mu^c_0=\sum_{n\in\Z}e^{2\pi n}\d_n$,
 which is not translation bounded. Namely,
$$
 \log \hat\mu^c_0(-r,r)=\log \sum_{|n|\le r}e^{2\pi n}\sim 2\pi r \qquad(r\to\infty).
$$
Hence the exact analog of results in \cite{BF} is incorrect. To formulate
our generalization of these results we recall the following definition (\cite[Ch.1]{L}):

A measure $\nu$ on $\R$ has a type $\s$ with respect to the exponential growth if
$$
  \limsup_{r\to\infty}r^{-1}\log\nu(-r,r)=\s.
$$
In our paper we obtain the following results:

\begin{Th}\label{T1}
If a positive measure $\mu$ on $\SH$ satisfies \eqref{lo} and its Fourier transform $\hat\mu$ is also a measure, then $\mu$ is translation bounded.
\end{Th}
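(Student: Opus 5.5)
The plan is to test the measure $\hat\mu^c$ against shifted bump functions and compare $\mu$ with the resulting integrals. Throughout, $\hat\mu$ in the statement is read as the c-Fourier transform $\hat\mu^c$ of \eqref{h}. Set $Q_s=\{z\in\SH:\ s\le\Re z\le s+1\}$. I need to bound $\mu(Q_s)$ uniformly in $s\in\R$.

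\textbf{Step 1 (a bump whose transform is positive on a box).} Fix a real even nonnegative $g\in\D$ with $\int g=1$, and put $\p(t)=\e^{-1}g(t/\e)$, so that $\hat\p^c(z)=\hat g^c(\e z)$. For $z=x+iy$ we have
$$
\Re\hat\p^c(z)=\int\p(t)\cos(2\pi xt)\cosh(2\pi yt)\,dt .
$$
If $\e$ is small enough, then $|2\pi xt|\le\pi/3$ on $\supp\p$ whenever $|x|\le L:=\d/\e$, with $\d>0$ a fixed small absolute constant. Hence $\Re\hat\p^c(z)\ge 1/2$ on the box $\{|x|\le L,\ |y|\le H\}$.

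Because $g$ is smooth with compact support, repeated integration by parts in $t$ gives decay uniformly in $|y|\le H$:
$$
|\hat\p^c(x+iy)|\le C_N(1+\e|x|)^{-N}.
$$

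\textbf{Step 2 (testing against shifts).} Set $\p_s(t)=\p(t)e^{2\pi i st}$, so that $\hat\p_s^c(z)=\hat\p^c(z-s)$. By \eqref{h} and the assumption that $\hat\mu^c$ is a measure,
$$
\Big|\int\hat\p^c(z-s)\,\mu(dz)\Big|=\Big|\int\p(t)e^{2\pi ist}\,\hat\mu^c(dt)\Big|\le\int|\p|\,d|\hat\mu^c|=:A,
$$
and $A$ does not depend on $s$.

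Let $B_s$ be the block of width $2L$ centred at $s$. Splitting $\SH$ into $B_s$ and the blocks $B_{s+2Lj}$, $j\ne0$, and using $\mu\ge0$ together with Step 1, I get
$$
\tfrac12\,\mu(B_s)\le A+\sum_{j\ne0}w_j\,\mu(B_{s+2Lj}),\qquad w_j=\sup_{|x|\ge(2|j|-1)L}|\hat\p^c|\le C_N\big(1+(2|j|-1)\d\big)^{-N}.
$$

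\textbf{Step 3 (absorbing the tail).} This is the step I expect to be the main obstacle. The weights $w_j$ do not obviously have sum smaller than $1/2$, since $\d$ must stay small to keep the cosine positive. I would handle this in two ways.

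First, I fix $\d$ and $N$, and choose the block width as a large multiple $K$ of $L$, i.e. I group the blocks. For $j\ne0$ the weight becomes at most $C_N(1+K|j|\d)^{-N}$, and the sum of these can be made $<1/4$ by taking $K$ large. The central block is then only partly covered by the box where $\Re\hat\p^c\ge1/2$. This is repaired by averaging the inequality of Step 2 over the $K$ shifts $s+2Lm$, $|m|\le K/2$, which reduces everything to the same structure with a favourable constant.

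Second, I use \eqref{lo} to make the sequence $m(R)=\sup_{|s|\le R}\mu(B_s)$ finite, with $m(R)\le C(1+R)^{\r}$. In the tail sum, the blocks inside $[-R,R]$ are bounded by $m(R)$, while the farther blocks contribute at most
$$
\sum_j w_j\,C(|s|+|j|L)^{\r}=O(1),
$$
uniformly in $|s|\le R$, once $N>\r+1$ and the weights are taken as the tail beyond $R-|s|$. Then $m(R)\le 2A+O(1)+\tfrac12 m(R)$, so $m(R)$ is bounded independently of $R$.

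\textbf{Conclusion.} Since each $Q_s$ lies in some block $B_{s'}$, the uniform bound on $m(R)$ gives $\sup_s\mu(Q_s)<\infty$, which is \eqref{tr}.

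The delicate point is to arrange the choice of $\e$, $\d$, $K$ and $N$ so that the absorption constant in Step 3 is strictly less than $1$. Everything else is routine bookkeeping.
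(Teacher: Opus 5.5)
Steps 1 and 2 are correct, but Step 3 cannot be closed for any choice of $\e,\d,K,N$. The blocks $B_{s\pm2L}$ share the vertical sides $\Re z=s\pm L$ with $B_s$, and on these sides Step 1 gives $\Re\hat\p^c(z-s)\ge\frac12$. Your weights are suprema of $|\hat\p^c|$, so this forces $w_1=w_{-1}\ge\frac12$. Hence $\sum_{j\ne0}w_j\ge1$, whereas absorbing the tail into $\frac12\mu(B_s)$ requires $\sum_{j\ne0}w_j<\frac12$.

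The same boundary argument works for any continuous kernel and any lower bound $c$ on the central block: the two neighbours alone carry weight $\ge2c$. So grouping blocks or averaging over the shifts $s+2Lm$ changes nothing, since averaging only replaces $\hat\p^c$ by the continuous kernel $\frac1{K+1}\sum_m\hat\p^c(\cdot-2Lm)$.

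Nor can \eqref{lo} rescue you. Once $\sum_{j\ne0}w_j\ge1$, every inequality of Step 2, read as a constraint on the numbers $\mu(B_s)$, is satisfied by unbounded convex values such as $\mu(B_s)=(1+|s|)^{\alpha}$ with $\alpha\ge1$. To see this, use $w_j=w_{-j}$ and $a(s+2Lj)+a(s-2Lj)\ge2a(s)$. So no bookkeeping can extract boundedness from these inequalities. Separately, your claim that the far blocks contribute $O(1)$ uniformly in $|s|\le R$ is false. For $|s|$ near $R$ the neighbours $j=\pm1$ already lie outside $[-R,R]$ and contribute about $w_1C_0R^{\r}$.

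The underlying defect is that you bound the contribution of $\SH\setminus B_s$ by $|\hat\p^c|$. This discards the sign of $\Re\hat\p^c$, which is exactly what positivity of $\mu$ lets you exploit.

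The missing idea is a test function whose c-Fourier transform has nonnegative real part on the whole strip, so that there is no tail to absorb. This is the Proposition of Section 4. There $\Psi_m$ is a power of a sum of two frequency-shifted Fej\'er kernels, whose transform is positive on $\R$ and asymptotically a positive multiple of $(\Re z)^{-2n}$ in $\SH$. It is then rescaled so that the neighbourhood of $\R$ where its transform has positive real part covers $\SH$. With $\eta=\inf_{0\le\Re z\le1}\Re\hat\Psi_m^c(z)>0$ and $\mu\ge0$, one gets at once
$$\mu(Q_s)\le\eta^{-1}\Re\int\hat\Psi_m^c\,d\mu_{-s}\le\eta^{-1}\int|\Psi_m|\,d|\hat\mu^c|.$$

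Your scheme can also be repaired along these lines. Weight the tail by the negative part $(\Re\hat\p^c)^-$ rather than by $|\hat\p^c|$, and take $g=h\star h$ with $h$ even. Then $\Re\hat g^c=(\Re\hat h^c)^2-(\Im\hat h^c)^2\ge-(\Im\hat h^c)^2$, where $\Im\hat h^c(u+iv)=O(|v|(1+|u|)^{-N})$ and $|v|\le\e H$. The negative weights then sum to $O((\e H)^2/\d)$, which is small for small $\e$. An iteration using \eqref{lo} then closes the argument.
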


\begin{Th}\label{T2}
Let $\mu$ be a positive measure on $\SH$ and its Fourier transform $\hat\mu^c$ be a pure point measure of the form \eqref{b}.
Then the measure
\begin{equation}\label{sq}
 \nu=\sum_{\g\in\G}|b_\g|^2\d_\g
\end{equation}
has  the property
\begin{equation}\label{pr}
 \nu(t-1,t+1)=O(e^{4\pi H|t|})\qquad\text{as}\quad t\to\infty,
\end{equation}
and has the type  with respect to the exponential growth at most $4\pi H$.
The same statement is true for any translation bounded measure $\mu$ on $\SH$ with a pure point measure $\hat\mu^c$.
\end{Th}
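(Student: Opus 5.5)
Following the approach of \cite{BF} for the line, we pass to the autocorrelation and use Parseval-type identities with a Wiener-type averaging; the new feature in the strip is the exponential weight $e^{2\pi yt}$ hidden in $\hat\p^c(z)=\widehat{(\p(t)e^{2\pi yt})}(x)$.

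Fix an even nonnegative $\p\in\D$ supported in $[-1/2,1/2]$ with $\p\ge1$ on $[-1/4,1/4]$, and for $t\in\R$ set $\p_t(s)=\p(s-t)$. Consider
$$
\psi_t:=\p_t*\tilde\p_t,\qquad \tilde\p_t(s)=\overline{\p_t(-s)},
$$
which is supported in $[-1,1]$ and does not depend on $t$ in this form. For the estimate on $(t-1,t+1)$ we instead pair $\hat\mu^c=\sum b_\g\d_\g$ with translates of $\p$: the coefficient $b_\g$ should be recovered as a Bohr-type mean
$$
b_\g=\lim_{R\to\infty}\frac1{2R}\int_{|\Re z|\le R} e^{2\pi i\g z}\,\mu(dz),
$$
which I would justify by testing $\hat\mu^c$ against $\p_\g(s)\,\frac1{2R}\widehat{\chi_{[-R,R]}}$-type functions (or more cleanly, Fejér kernels $\p_{\g,R}$ concentrating at $\g$) and using \eqref{lo} to control the error. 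Note $|e^{2\pi i\g z}|=e^{-2\pi\g y}\le e^{2\pi H|\g|}$ on $\SH$.

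Next, the main step: a Bessel/Parseval inequality for the "almost periodic" measure $\mu$. Define for $\g$ in $(t-1,t+1)$ the functions $e_\g(z)=e^{2\pi i\g x}$ and the weights $w_\g(y)=e^{-2\pi\g y}$. Since $\mu\ge0$ (or translation bounded), the sesquilinear form
$$
\langle f,g\rangle=\limsup_{R\to\infty}\frac1{2R}\int_{|\Re z|\le R}f\bar g\,d|\mu|
$$
is positive semidefinite, and $\langle e_\g,e_{\g'}\rangle$ for $\g\ne\g'$ is essentially $\hat\mu^c$ tested near $\g-\g'$... Rather than that, I would follow \cite{BF} directly: the function $F_t(s)=\sum_{\g}b_\g\,\hat\p^c$-type trigonometric sum, i.e. compute
$$
\sum_{\g}|b_\g|^2|\p(\g-t)|^2\le \lim_R\frac1{2R}\int_{|x|\le R}\Bigl|\int e^{2\pi i s x}\ldots\Bigr|^2dx,
$$
obtained by applying Parseval on $[-R,R]$ to $g_t(x)=\int \hat\p_t^c(x'-x+iy)\mu(dz)$, whose Fourier–Bohr coefficients are $b_\g\p(\g-t)$-weighted. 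Bounding $|g_t(x)|$ uniformly: $|\hat\p_t^c(w)|\le \|\p\|_1 e^{2\pi H(|t|+1/2)}$ for $|\Im w|\le H$ and is rapidly decaying in $\Re w$, so by translation boundedness (which for positive $\mu$ follows from Theorem \ref{T1}) $\sup_x|g_t(x)|\le Ce^{2\pi H|t|}$. Squaring gives $\nu(t-1/4,t+1/4)\le C e^{4\pi H|t|}$, and covering $(t-1,t+1)$ by finitely many such intervals yields \eqref{pr}; summing over $|t|\le r$ gives $\nu(-r,r)=O(re^{4\pi Hr})$, hence type at most $4\pi H$.

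The main obstacle is the rigorous identification of the Bohr coefficients of $g_t$ with $b_\g\p(\g-t)$ and the Bessel inequality $\sum|c_\g|^2\le \limsup\frac1{2R}\int|g_t|^2$ without knowing a priori that $g_t$ is almost periodic; I would handle it as in \cite{BF} by showing $g_t$ equals the distributional inverse Fourier transform of $\p_t\hat\mu^c$, i.e.\ a (possibly infinite) sum $\sum b_\g\p(\g-t)e^{2\pi i\g x}$ with finitely many terms locally, then applying Parseval over long intervals with a Fejér smoothing.
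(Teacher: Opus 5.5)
Your final argument is correct and is essentially the paper's proof. You set $g_t(x)=\int_{\SH}\hat\p_t^c(z-x)\,\mu(dz)=\int_\R e^{2\pi i xs}\p_t(s)\,\hat\mu^c(ds)=\sum_\g b_\g\p(\g-t)e^{2\pi i\g x}$, apply Parseval, and bound $\sup_x|g_t(x)|\le Ce^{2\pi H|t|}$ through $\hat\p_t^c(w)=e^{-2\pi i tw}\hat\p^c(w)$ and translation boundedness (via Theorem \ref{T1} for positive $\mu$). The step you call the main obstacle is actually immediate, since $g_t$ is just the definition of $\hat\mu^c$ applied to $e^{2\pi ixs}\p_t(s)\in\D$ together with \eqref{p1}, and $\sum_{\g\in\supp\p_t}|b_\g|=|\hat\mu^c|(\supp\p_t)<\infty$ makes it an absolutely convergent Dirichlet series with coefficients $b_\g\p(\g-t)$, so no local finiteness of $\G$ or Fej\'er smoothing is needed; separately, your discarded Bohr-mean formula should have $e^{-2\pi i\g z}$ in place of $e^{2\pi i\g z}$, as the example $\mu_0$ shows.
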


\begin{Th}\label{T3}
Let $\mu$ be a positive measure on $\SH$ and its Fourier transform $\hat\mu^c$ be a pure point measure of the form \eqref{b}.
If there exists $\eta>0$ such that  sets $\G\cap (t-\eta,t+\eta)$ are linearly independent over $\Z$ for all $t\in\R$,
then the measure $\hat\mu^c$ has the property
\begin{equation}\label{pr1}
 |\hat\mu^c|(t-1,t+1)=O(e^{2\pi H|t|})\qquad\text{as}\quad t\to\infty,
\end{equation}
and has the type  with respect to the exponential growth at most $2\pi H$. The same statement is true for any translation bounded measure $\mu$ on $\SH$
with a pure point measure $\hat\mu^c$  satisfying the condition of the theorem.
\end{Th}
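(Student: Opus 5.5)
The plan is to follow the scheme of \cite{BF} for the line. The factor $e^{2\pi H|t|}$ should come only from the size of $\hat\p^c$ on $\SH$ when $\p$ is supported near $t$.

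First, reduce the positive case to the translation bounded one. If $\mu\ge0$ and $\hat\mu^c$ is a measure, then Theorem \ref{T1} shows that $\mu$ is translation bounded, provided \eqref{lo} holds. I will either assume \eqref{lo}, or derive it from the facts in Sections \ref{S4}--\ref{S5} that $\hat\mu^c$ is a well-defined distribution. So let $\mu$ be translation bounded with constant $C_1$. Fix $t$ and $\p\in\D$ with $\supp\p\subset(t-\eta,t+\eta)$ and $0\le\p\le1$. For $x\in\R$, replacing $\p(s)$ by $\p(s)e^{2\pi i xs}$ replaces $\hat\p^c(z)$ by $\hat\p^c(z-x)$. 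Hence
$$
F(x):=\int\hat\p^c(z-x)\,\mu(dz)=\sum_{\g\in\G}b_\g\p(\g)e^{2\pi i\g x}
$$
in the sense of distributions, and the spectrum of $F$ lies in $\G\cap(t-\eta,t+\eta)$. For $z=u+iy$ with $|y|\le H$ we have $|\hat\p^c(z)|\le\int|\p(s)|e^{2\pi H|s|}ds\le 2\eta e^{2\pi H(|t|+\eta)}$. Integrating by parts twice also gives $|\hat\p^c(u+iy)|\le C_\p e^{2\pi H(|t|+\eta)}(1+u^2)^{-1}$, with $C_\p$ depending only on the shape of $\p$ and not on $t$, since I will use translates of one fixed function. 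Combining this with \eqref{tr} yields
$$
\sup_{x\in\R}|F(x)|\le C\,C_1\,e^{2\pi H|t|},
$$
and the same argument shows that $F$ is uniformly continuous.

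Second, identify $F$ as a Bohr (or at least Besicovitch) almost periodic function whose Fourier coefficients are $a_\g=b_\g\p(\g)$. Since $F$ is bounded and its distributional Fourier transform is the pure point measure $\sum a_\g\d_\g$, the Bochner--Fejér polynomials $\s_kF$ satisfy $\sup|\s_kF|\le\sup|F|$, and their coefficients converge to $a_\g$.

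Third, the frequencies of $F$ are linearly independent over $\Z$, so Kronecker's theorem applies to any finite subset $\G_0\subset\G\cap(t-\eta,t+\eta)$. It gives points $x$ at which every $e^{2\pi i\g x}$ approximates $\overline{\mathrm{sgn}\,a_\g}$ for $\g\in\G_0$, and therefore
$$
\sum_{\g\in\G_0}|a_\g|\le\sup|F|.
$$
To apply this to a trigonometric polynomial I use $\s_kF$, pass to the limit $k\to\infty$, and then let $\G_0$ exhaust the set. This gives $|\hat\mu^c|\bigl((t-\eta/2,t+\eta/2)\bigr)\le Ce^{2\pi H|t|}$ if $\p\equiv1$ there.

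Finally, cover $(t-1,t+1)$ by $O(1/\eta)$ such intervals to get \eqref{pr1}. Summing over the unit intervals in $(-r,r)$ gives $|\hat\mu^c|(-r,r)=O(re^{2\pi Hr})$, so the type is at most $2\pi H$.

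The main obstacle is the second and third steps: justifying rigorously that the bounded function $F$ is almost periodic, with a mean-value expansion that allows the Kronecker argument to be carried out on Fejér-type approximants. This matters because $\G\cap(t-\eta,t+\eta)$ need not be finite. I would first try Bochner--Fejér summation, with the Fejér kernel built on a basis of the independent frequencies. If that fails, I would instead convolve $F$ with Fejér kernels in $x$ and use uniform continuity directly.
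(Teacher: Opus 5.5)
Your argument is correct and has the same skeleton as the paper's proof. You reduce to the translation bounded case by Theorem \ref{T1}; note that \eqref{lo} is a standing hypothesis from Section \ref{S2}, and deriving it from the well-definedness of $\hat\mu^c$ would be circular, since Section \ref{S4} proves well-definedness under \eqref{lo}. You then bound $F$ uniformly by $Ce^{2\pi H|t|}$, which is \eqref{p7} with $s=-t$ for a fixed bump centred at $0$. Finally you turn this sup bound into an $\ell^1$ bound on the coefficients via Kronecker's lemma.

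Where you differ is the step you call the main obstacle, and it dissolves once you use that $\hat\mu^c$ is a locally finite measure. Then $\sum_{\g\in\supp\p}|b_\g|=|\hat\mu^c|(\supp\p)<\infty$. Moreover, $F(x)=\sum_\g b_\g\p(\g)e^{2\pi i\g x}$ holds for every $x$, not merely in the sense of distributions: it is \eqref{h} applied to the test function $\p(s)e^{2\pi ixs}$. So $F$ is an absolutely convergent Dirichlet series, hence almost periodic for free, and the possible infinitude of $\G\cap(t-\eta,t+\eta)$ is handled by truncation.

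That is what the paper does. It takes a finite $A\subset\G\cap(t-\eta,t+\eta)$ whose complement carries small coefficient mass $\e$. It applies Kronecker once to get a single $s$ with $\Re e^{2\pi is\g}b_\g\ge|b_\g|/2$ for all $\g\in A$. It then reads off $\frac12\sum_{\g\in A\cap I}|b_\g|-\e\le|F(s)|\le Ce^{2\pi H|t|}$, where $I$ is the interval on which $\p\equiv1$; the nonnegativity of $\p$ disposes of the terms in $A\setminus I$. The paper phrases this as a contradiction along a sequence $t_n$; your direct bound followed by a covering is slightly cleaner.

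Your Bochner--Fej\'er route is also valid. Indeed $\sup|\s_kF|\le\sup|F|$, the frequencies of $\s_kF$ lie in the independent set $\G\cap\supp\p$, and for such trigonometric polynomials Kronecker gives $\sup|P|=\sum|c_\g|$. But it presupposes the almost periodicity that absolute summability already hands you. So the extra machinery, and the uniform continuity of $F$, buy nothing here.
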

Recall that the set $X\subset\R$ is linearly independent over $\Z$ if for any finite number of elements $t_1,\dots,t_n\in X$
and arbitrary $m_1,\dots,m_n\in\Z$ the equality $m_1t_1+\dots+m_nt_n=0$ implies $m_1=\dots=m_n=0$.

Note that the condition on $\G$ be satisfy  in the case $|t-t'|>\eta$ for all $t,\,t'\in\G,\ t'\neq t$. Thus, the above example of the measure $\mu_0$ shows that estimates \eqref{pr}
and \eqref{pr1} are optimal.

\bigskip
\section{An example of c-Fourier transform for a measure in a strip}\label{S3}
\bigskip

 Let's find the c-Fourier transform of the measure $\mu_0=\sum_{n\in\Z}\d_{n+i}$ in the strip $|\Im z|\le1$.

First note that zeros of exponential polynomial
$$
\sin\pi(z-i)=(i/2)e^{-\pi} e^{-i\pi z}-(i/2)e^\pi e^{i\pi z}
$$
correspond to support of this measure. The logarithmic derivative of $\sin\pi(z-i)$ has the form
$$
  \frac{\pi\cos\pi(z-i)}{\sin\pi(z-i)}=\pi i\frac{e^{\pi i(z-i)}+e^{-\pi i(z-i)}}{e^{\pi i(z-i)}-e^{-\pi i(z-i)}}.
$$
Let $\Im z>1$. We get $\left|e^{\pi i(z-i)}\right|<1,\ \left|e^{-\pi i(z-i)}\right|>1$ and
$$
  \frac{\pi\cos\pi(z-i)}{\sin\pi(z-i)}=-\pi i\frac{1+e^{2\pi i(z-i)}}{1-e^{2\pi i(z-i)}}=-\pi i-2\pi i\sum_{n=1}^\infty e^{2\pi n}e^{2\pi inz}.
$$
If $\Im z<-1$, then we get $\left|e^{\pi i(z-i)}\right|>1,\ \left|e^{-\pi i(z-i)}\right|<1$ and
$$
  \frac{\pi\cos\pi(z-i)}{\sin\pi(z-i)}=\pi i\frac{e^{-2\pi i(z-i)}+1}{1-e^{-2\pi i(z-i)}}=\pi i+2\pi i\sum_{n=1}^\infty e^{-2\pi n}e^{-2\pi inz}.
$$
It was proved in \cite{F9} that for the case of $\supp\mu\subset\SH$ the measure $\hat\mu^c$ has the form
$$
\hat\mu^c=\sum_{\g\in\G^*}\frac{ih^+_\g}{2\pi}\d_\g-\sum_{\g\in\G_*}\frac{ih^-_\g}{2\pi}\d_\g,
$$
where $\G^*\subset[0,\infty),\ \G_*\subset(-\infty,0]$ and $h^+$ and $h^-$ are the coefficients of the Diriclet series for the logarithmic derivative of the
corresponding exponential polynomial for the measure $\mu$ in the cases $\Im z>H$ and, respectively,  $\Im z<-H$. Hence,
$$
\hat\mu^c_0=\sum_{n\in\Z}e^{2\pi n}\d_n.
$$

\bigskip
\section{Some propeties of the c-Fourier transform}\label{S4}
\bigskip

The next three equalities  for all $\p\in\D$ and $\mu$ satisfying \eqref{lo} follow immediately from the definition of c-Fourier transform:
\begin{equation}\label{p1}
 \widehat{(e^{2\pi i\tau t}\p(t))^c}=(\hat\p^c)(z-\tau),\qquad z\in\SH,\quad \tau\in\R,
\end{equation}
\begin{equation}\label{p2}
 (\hat\p^c_{\tau})(z)=e^{-2\pi i\tau z}\hat\p^c(z)\quad\text{for}\quad\p_{\tau}(t)=\p(t-\tau),\qquad z\in\SH,\quad \tau\in\R,
\end{equation}
\begin{equation}\label{p3}
 (\widehat{\mu_{\tau}})^c(t)=e^{-2\pi i\tau t}\hat\mu^c(t)\quad\text{for}\quad\mu_{\tau}(E)=\mu(E-\tau),\qquad  \tau\in\R.
\end{equation}
Let $\p(t)$ be an $m$-differentiable function with compact support such that $\p^{(m)}\in L^1(\R)$. Integrating \eqref{f} by parts, we obtain
$$
  \hat\p^c(z)=(2\pi iz)^{-m}\int_{\supp\p}\p^{(m)}(t)e^{-2\pi itz}dt, \quad z\in\SH\setminus\{0\},
$$
therefore, for $z\in\SH$
 \begin{equation}\label{p4}
  |\hat\p^c(z)|\le C_3(\max\{1,|z|\})^{-m},
\end{equation}
where the constant $C_3$ depends on $\p$ and $m$.

Let a measure $\mu$ on $\SH$  satisfy \eqref{lo} with some $\r<m$.  Denote
$$
  M_\mu(r):=|\mu|(\{z\in\SH:\,|z|\le r\}),
$$
 Using \eqref{p4} and integrating by parts, we get
$$
 \left|\int_{z\in\SH}\hat\p^c(z)\mu(dz)\right|\le C_3 M_\mu(1)+C_3\int_{z\in\SH,|z|>1}|z|^{-m}|\mu|(dz)\le C_3 m\int_1^\infty r^{-m-1}M_\mu(r)dr.
$$
 Therefore the integral in \eqref{h} is finite, and the distribution $\hat\mu^c$  is well-define on the space of all $m$-differentiable continuous functions with compact support. Since for $s\in\R$
$$
  |e^{2\pi isz}\hat\p^c(z)|\le e^{2\pi|s|H}|\hat\p^c(z)|\le C_3e^{2\pi|s|H}(\max\{1,|z|\})^{-m},
$$
we see that the same arguments prove the estimate
\begin{equation}\label{p6}
\left|\int_{\SH}e^{2\pi isz}\hat\p^c(z)\mu(dz)\right|\le C_4 e^{2\pi|s|H},
\end{equation}
where the constant $C_4$ does not depend on $s$.

  If the measure $\mu$ is translation bounded, then  for any shift of the measure $\mu_\tau(E)=\mu(E-\tau)$ we have
 $$
|\mu_\tau|(\{z\in\SH:\,|z|\le r\})\le 3C_1\max\{1,r\},
$$
 where $C_1$ is the constant from \eqref{tr}.  We obtain from \eqref{p6}  with $m=2$
\begin{equation}\label{p7}
\left|\int_{\SH}e^{2\pi isz}\hat\p^c(z)\mu_\tau(dz)\right|\le C_5 e^{2\pi|s|H},
\end{equation}
where the constant $C_5$ does not depend on either $s$ or $\tau$.

We will also use the following proposition
\begin{Pro}
For any $m\in\N$ there is a  function $\Psi_m(t)$ with compact support such that its Fourier transform $\hat\Psi_m^c(z)$ satisfies \eqref{p4}
and $\Re\hat\Psi_m^c(z)>0$ for all $z\in\SH$.
\end{Pro}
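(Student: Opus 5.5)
We need a compactly supported $\Psi$ with $\Re\hat\Psi^c(z)>0$ on the whole strip $\SH$ and decay of order $|z|^{-m}$. The plan is to build it from products of sinc-type functions, then perturb so that positivity holds on the strip rather than only on the real line.

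Start from the triangle function $\chi=\mathbf 1_{[-a,a]}*\mathbf 1_{[-a,a]}$. Its c-transform is
$$
\hat\chi^c(z)=\Bigl(\frac{\sin 2\pi a z}{\pi z}\Bigr)^2 ,
$$
which is nonnegative on $\R$ but can have zeros and oscillations for $z=x+iy$. A convolution power therefore does not immediately work. It is better to choose the ingredients more carefully.

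First treat the region $|x|\le X$. For a small parameter $a>0$, take
$$
\Phi(t)=\underbrace{\mathbf 1_{[-a,a]}*\dots*\mathbf 1_{[-a,a]}}_{m+2 \text{ times}},\qquad
\hat\Phi^c(z)=\Bigl(\frac{\sin 2\pi a z}{\pi z}\Bigr)^{m+2}.
$$
On $|z|\le R$ with $aR$ small, $\frac{\sin 2\pi a z}{\pi z}=2a(1+O(a^2|z|^2))$. Hence $\hat\Phi^c$ is close to the positive constant $(2a)^{m+2}$, and its real part is positive there. Since $\Phi\in C^{m}$ with $\Phi^{(m+1)}\in L^1$, the decay estimate \eqref{p4} holds. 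Positivity for large $|x|$ fails, however, because the zeros of $\sin 2\pi a z$ are real.

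Instead, take the $m+1$ factors with rationally independent scales $a_1,\dots,a_{m+1}$:
$$
\hat\Psi^c(z)=\prod_j \frac{\sin 2\pi a_j z}{\pi z}.
$$
For $|y|\le H$ and large $|x|$ one has $\sin 2\pi a_j(x+iy)=\sin(2\pi a_j x)\cosh(2\pi a_j y)+i\cos(2\pi a_j x)\sinh(2\pi a_j y)$, so the sign of the real part of the product oscillates. This again fails, and I expect this step to be the main obstacle: a real-even construction always has real zeros or sign changes at infinity.

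The resolution is to add a strictly dominant term whose real part is positive with controlled decay. Take a Gaussian-like compactly supported bump $g\ge0$ with $\hat g>0$ on $\R$; this exists, for example $g=h*\tilde h$, which gives $\hat g=|\hat h|^2\ge0$ with zeros. Zeros are unavoidable for compact support on the real line, so strict positivity of the real part must come from the imaginary direction.

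Consider instead $\Psi(t)=\Phi(t)e^{-2\pi bt}$, whose c-transform is $\hat\Psi^c(z)=\hat\Phi^c(z-ib)$. Choose $b>H$, so that $z-ib$ lies in the lower half-plane $\Im w\le -(b-H)<0$ for every $z\in\SH$. It therefore suffices to find a compactly supported $\Phi$ with $\Re\hat\Phi^c(w)>0$ on the half-strip $\Im w\in[-b-H,-b+H]$, a region away from the real axis.

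Take $\Phi=\mathbf 1_{[0,1]}^{*k}$, so that $\hat\Phi^c(w)=\bigl((1-e^{-2\pi i w})/(2\pi i w)\bigr)^k$. For $\Im w=-v$ with $v>0$ large, $|e^{-2\pi i w}|=e^{-2\pi v}$ is tiny, so
$$
\hat\Phi^c(w)=(2\pi i w)^{-k}\bigl(1+O(e^{-2\pi v})\bigr).
$$
Take $k=4j$, so that $(2\pi i w)^{-k}=(2\pi w)^{-k}$, and choose $v$ large. The remaining issue is the argument of $w^{-k}$. For $w=x-iv$ it sweeps through $k\arg w$, which need not stay in $(-\pi/2,\pi/2)$ as $|x|$ varies. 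This is the crux, and the fix is to replace $(2\pi i w)^{-1}$ by a factor whose argument stays small.

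To do this, use a single exponential factor, $\Phi(t)=\mathbf 1_{[0,1]}(t)e^{-2\pi c t}$. Then
$$
\hat\Phi^c(w)=\frac{1-e^{-2\pi(c+iw)}}{2\pi(c+iw)},
$$
and with $w=x+iy$ one has $c+iw=(c-y)+ix$. Consequently $\arg(c+iw)\in(-\pi/2,\pi/2)$ whenever $c>y$. A power $k$ spoils this for large $x$, since the argument approaches $\pm k\pi/2$.

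So instead of powers I would take a sum $\sum_j\alpha_j\hat\Phi_j^c$ and argue via the known fact (Paley–Wiener together with Hörmander-type constructions) that one can choose $\Psi=\Phi*\Phi_{\mathrm{smooth}}$, where $\Phi$ has real part bounded below by $c/|z|$ and the smooth factor is positive real up to an argument error $o(1)$ relative to it. The main difficulty is precisely controlling this argument uniformly in $x$ on the strip, and this is where I would concentrate the effort.
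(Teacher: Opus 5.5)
Your proposal never actually constructs $\Psi_m$. Every concrete candidate is discarded, and the last paragraph replaces the construction with an appeal to unspecified ``Paley--Wiener and H\"ormander-type constructions''. It explicitly leaves open the uniform control of the argument on the strip, which is the whole content of the Proposition.

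That final plan is circular: a compactly supported smooth factor whose c-transform is ``positive real up to an argument error $o(1)$'' uniformly on $\SH$ is essentially what has to be proved. Its other ingredient is impossible for the kind of building blocks you use. If a compactly supported $\Phi$ of bounded variation had $\Re\hat\Phi(x)\ge c/|x|$ for all large real $|x|$, then $\Re\int_{-X}^{X}\hat\Phi(x)\,dx\ge 2c\log X-O(1)$. But $\int_{-X}^{X}\hat\Phi(x)\,dx=\int\Phi(t)\frac{\sin 2\pi Xt}{\pi t}\,dt$ stays bounded.

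The candidate $\mathbf 1_{[0,1]}(t)e^{-2\pi ct}$ already fails as a single factor. With $\alpha=2\pi(c-y)$, the real part of its c-transform at $x+iy$ equals $\frac{e^{-\alpha}\sin 2\pi x}{2\pi x}+O(x^{-2})$. The jump at $t=1$ produces an oscillating term of order $1/|x|$, which beats the positive term of order $x^{-2}$ coming from the jump at $t=0$; the condition $c>y$ controls only the denominator. The shift $z\mapsto z-ib$ does not cure this. It merely damps that oscillating contribution by a constant factor, while the obstruction sits at $|\Re z|\to\infty$.

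What you are missing is that the problem is not real zeros. Your premises that zeros on $\R$ are unavoidable under compact support, and that a real even construction must have real zeros or sign changes at infinity, are both false. The real problem is that in $F(z)=\frac{\sin^2 2\pi z}{(2\pi z)^2}=\frac{1-\cos 4\pi z}{2(2\pi z)^2}$ the oscillating part has the same order as the non-oscillating one. Consequently $\Re F(k/2+iy)<0$ for $y\ne0$ and large $|k|$, $k\in\Z$.

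The paper removes this term by adding two frequency shifts, $\hat\psi^c(z)=F(z-1/8)+F(z+1/8)$. This is the c-transform of $\frac12\cos(\pi t/4)\,(\theta\star\theta)(t)$ with $\theta=\mathbf 1_{[-1,1]}$; the paper writes translates in $t$, but it is this modulation that shifts $z$. The argument then runs as follows.
\begin{itemize}
\item The summands are nonnegative on $\R$ with disjoint zero sets, so $\hat\psi^c>0$ on $\R$.
\item Since $\sin^2(2\pi z\mp\pi/4)=\frac{1\mp\sin 4\pi z}{2}$, the oscillating parts cancel up to $O(|z|^{-3})$. Hence $\hat\psi^c(z)=\frac{1+o(1)}{4\pi^2z^2}$ uniformly on $\SH$.
\item Take $\Phi=\psi\star\dots\star\psi$ with $n\ge m/2$ factors. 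Then $\hat\Phi^c=(\hat\psi^c)^n$ satisfies \eqref{p4}.
\item $\hat\Phi^c$ has positive real part on $\SH\cap\{|\Re z|\ge N\}$, because $\arg z\to0$ or $\pi$ in a strip.
\item By continuity and compactness, it also has positive real part on $\{|\Re z|\le N,\ |\Im z|\le\eta\}$ for some $0<\eta\le H$.
\item Finally, $\Psi_m(t)=(H/\eta)\Phi(Ht/\eta)$ has $\hat\Psi_m^c(z)=\hat\Phi^c(\eta z/H)$. The map $z\mapsto\eta z/H$ sends $\SH$ into the thin strip where positivity is known.
\end{itemize}
Your small-$a$ computation is the germ of this rescaling step, but without the cancellation at infinity it cannot be completed.
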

{\bf Proof}. Set $\t(t)=1$ for $|t|\le1$ and $\t(t)=0$ for $|t|>1$. The c-Fourier transform of the function $\t\star\t$ equals $\sin^2(2\pi z)/(2\pi z)^2$,
therefore it is non-negative for all $z\in\R$ and vanishes only at the points $z=k/2,\ k\in\Z\setminus\{0\}$. Consider the function
$$
   \psi(t)=(\t\star\t)(t-1/8)+(\t\star\t)(t+1/8).
$$
It has compact support and c-Fourier transform
$$
   \hat\psi^c(z)=\sin^2(2\pi z-\pi/4)/(2\pi z-\pi/4)^2+\sin^2(2\pi z+\pi/4)/(2\pi z+\pi/4)^2,
$$
that is strictly positive on the real axis. The simple calculation shows that
\begin{multline*}
   \hat\psi^c(z)=\frac{i}{4}\left(e^{4\pi iz}-e^{-4\pi iz}\right)\left[\frac{1}{(2\pi z-\pi/4)^2}-\frac{1}{(2\pi z+\pi/4)^2}\right]\\
+\frac{1}{4}\left\{\frac{2}{(2\pi z-\pi/4)^2}+\frac{2}{(2\pi z+\pi/4)^2}\right\}.
\end{multline*}
The expression in the square brackets behaves like $z^{-3}$, and the expression in the braces behaves like $(\pi z)^{-2}$ as $z\to\infty$. Hence for $z\in\SH,\ z\to\infty$,
$$
  \hat\psi^c(z)=O(z^{-3})+\frac{1+o(1)}{4\pi^2z^2}=\frac{1}{4\pi^2(\Re z)^2}(1+o(1)).
$$
Define the convolution of $n\ge m/2$ functions $\psi(t)$ with $\Phi(t)$. Clearly, this function   has compact support. Also, its c-Fourier transform
$$
   \hat\Phi^c(z)=\left[\sin^2(2\pi z-\pi/4)/(2\pi z-\pi/4)^2+\sin^2(2\pi z+\pi/4)/(2\pi z+\pi/4)^2\right]^n
$$
satisfies \eqref{p4} and  strictly positive on  the real axis. Therefore $\Re \hat\Phi^c(z)$ is strictly positive in a neighborhood of $\R$. We also have  for $z\in\SH,\ z\to\infty$
$$
 \hat\Phi^c(z)=\frac{1}{4^n\pi^{2n}(\Re z)^{2n}}(1+o(1)),
$$
hence there is $N<\infty$ such that $\Re\hat\Phi^c(z)>0$ for $z\in\SH,\ |\Re z|\ge N$. On the other hand, there is $\eta>0$ such that $\Re\hat\Phi^c(z)>0$ for $|\Im z|\le\eta,\ |\Re z|\le N$.
Consequently, for the compactly supported function $\Psi_m(t):=(H/\eta)\Phi(Ht/\eta)$  we obtain
$$
  \Re\hat\Psi_m^c(z)=\Re\hat\Phi^c(\eta z/H)>0
$$
for all $z\in\SH$.\qquad$\Box$
\bigskip

\bigskip
\section{Some properties of almost periodic functions}\label{S5}
\bigskip

The proofs of our theorems are based on some results of the classical theory of almost periodic functions
(see \cite{B}, \cite{Le}).

Recall that a continuous function $f(t)$ on the real line $\R$
is called {\sl almost periodic} if for any  $\e>0$ the set of its $\e$-almost periods
  $$
E_\e= \{\tau\in\R:\,\sup_{t\in\R}|f(t+\tau)-f(t)|<\e\}
  $$
is relatively dense, i.e., $E_\e\cap(t,t+L)\neq\emptyset$ for all $t\in\R$ and some $L=L(\e)$.

Each almost periodic function in $\R$ is bounded,  a finite sum or product of almost periodic  functions is also almost periodic,
and the uniform limit on $\R$ of almost periodic functions is almost periodic too. In particular,
every  absolutely convergent Dirichlet series
\begin{equation}\label{Dir}
   D(t)=\sum_{\o\in\O}a_\o e^{2\pi it\o},\qquad \sum_{\o\in\O}|a_\o|<\infty,
\end{equation}
with a countable $\O\subset\R$ is almost periodic. It is easy to check that the usual Fourier transform of $D(t)$ is a measure with a finite total mass
$$
\hat D=\sum_{\o\in\O}a_\o \d_\o.
$$

For every almost periodic $f(t)$ and every $\o\in\R$ one considers
the Fourier coefficients $c_\o(f)\in\C$ as
$$
    c_\o(f)=\lim_{T\to\infty}\frac{1}{2T}\int_{-T}^Tf(t)e^{-2\pi it\o}dt.
$$
The set $\{\o:\,c_\o(f)\neq 0\}$ is at most countable.
It is easy to check that the uniform convergence of the series \eqref{Dir} implies $c_\o(D)=a_\o$ for all $\o\in\R$.
Next,  the Parseval identity
\begin{equation}\label{Par}
 \sum_{\o\in\R}|c_\o(f)|^2=\lim_{T\to\infty}\frac{1}{2T}\int_{-T}^T|f(t)|^2dt
\end{equation}
holds.

Note that by Y.Meyer (cf. \cite[Theorem 3.8]{M3}), if the Fourier transform   of the measure $f(t)dt$ for an almost periodic function  $f$ is also a measure, then
$$
  \sum_{|\o|<r}|c_\o(f)|<\infty, \quad \forall\,r<\infty,\qquad\text{and}\quad  \widehat{f(t)dt}=\sum_{\o\in\O} c_\o(f)\d_\o.
$$
 We will also use the Kroneker lemma (see \cite[Chapter II]{Le}):

Let $x_1,\dots,x_N$ be linearly independent over $\Z$ real numbers and $\t_1,\dots,\t_N$ be arbitrary real numbers. Then for every $\e>0$
there exist $t\in\R$ and $p_j\in\Z$, $j=1,\ldots,N$, such that
$$
|tx_j-\t_j-p_j|<\e,\quad j=1,\ldots,N.
$$

\bigskip
\section{Proofs of theorems}\label{S6}
\bigskip

{\bf Proof of Theorem \ref{T1}}. Let the measure $\mu$ be satisfied \eqref{p4} with some $\r<\infty$, and $\Psi_m$ be the function from the Proposition with $m>\r$.
Set $\eta=\inf_{z\in\SH:\,0\le\Re z\le 1}\Re\hat\Psi_m^c(z)$. Using \eqref{p3}, we get for any fixed $s\in\R$
\begin{multline*}
 \int_{z\in\SH:\,s\le\Re z\le s+1}\mu(dz)\le\eta^{-1}\int_{z\in\SH:\,0\le\Re z\le 1}\Re\hat\Psi_m^c(z)\mu_{-s}(dz) \le\eta^{-1}\int_{\SH}\Re\hat\Psi_m^c(z)\mu_{-s}(dz)\\
=\eta^{-1}\Re\int_{\SH}\hat\Psi_m^c(z)\mu_{-s}(dz)\le\eta^{-1}\left|\int_{\SH}\hat\Psi_m^c(z)\mu_{-s}(dz)\right|=\eta^{-1}\left|\int_\R\Psi_m(t)\hat\mu_{-s}^c(dt)\right|=\\
 \eta^{-1}\left|\int_\R \Psi_m(t)e^{2\pi ist}\hat\mu^c(dt)\right| \le\eta^{-1}\int_\R|\Psi_m(t)||\hat\mu^c|(dt).
\end{multline*}
Since $\Psi_m$ is compactly supported and $|\hat\mu^c|$ is a measure, we see that the last integral is finite, and the first integral is bounded by the independent on $s$ constant. \qquad$\Box$

{\bf Proof of Theorem \ref{T2}}. By Theorem \ref{T1}, it is enough to prove  Theorem \ref{T2} for  translation bounded measures $\mu$.

Let $\p\in\D$ be a non-negative even function  such that $\p(t)\equiv1$ for $|t|<1$.  Note that $\hat\p^c(z)$ is also even, and for any $\tau\in\R$
$$
\hat\p^c(\tau-z)=\hat\p^c(z-\tau).
$$
 Using \eqref{p1} and the definition of the Fourier transform of measure on $\SH$, we get  for every $\tau\in\R$
\begin{eqnarray*}
 (\mu\star\hat\p^c)(\tau)=\int_{z\in\SH}\hat\p^c(\tau-z)\mu(dz) &=& \int_{z\in\SH}\widehat{(e^{2\pi i\tau t}\p(t))^c}(z)\mu(dz)\\
 = \int_\R \p(t)e^{2\pi i\tau t}\hat\mu^c(dt)  &=& \sum_{\g\in\supp\p}b_\g\p(\g)e^{2\pi i\tau\g}.
\end{eqnarray*}
Since $\hat\mu^c$ is a measure, we see that $\sum_{\g\in\supp\p}|b_\g|<\infty$  and the last sum converges absolutely. Therefore,
$(\mu\star\hat\p^c)(\tau)$ is an almost periodic function on $\R$. The above equality also implies (see Section \ref{S5}) that  the usual Fourier transform of this function  is a measure
$$
\widehat{\mu\star\hat\p^c}=\sum_{\g\in\supp\p}\p(\g)b_\g \d_\g.
$$
By Meyer's result, the Fourier coefficients of the function $(\mu\star\hat\p^c)(\tau)$ are
$$
   c_\g(\mu\star\hat\p^c)=b_\g\p(\g).
$$
Using Parseval's equality \eqref{Par}, we obtain that mass of the measure $\nu$ from \eqref{sq} on $[-1,1]$ can be estimated as follows
\begin{multline*}
  \nu[-1,1] = \sum_{|\g|<1}|b_\g|^2  \le \sum_{\g\in\R}|\p(\g)b_\g|^2= \sum_{\g\in\R}| c_\g(\mu\star\hat\p^c)|^2 \\
=\lim_{T\to\infty}\frac{1}{2T}\int_{-T}^T|(\mu\star\hat\p^c)(\tau)|^2d\tau\le\sup_{\tau\in\R}|(\mu\star\hat\p^c)(\tau)|^2=\sup_{\tau\in\R}\left|\int_{\SH}\hat\p^c(z)\mu_{-\tau}(dz)\right|^2.
\end{multline*}
 The measures  $\mu$ is translation bounded, hence it follows from \eqref{p7} with $s=0$ that
$$
  \left|\int_{\SH}\hat\p^c(z)\mu_{-\tau}(dz)\right|\le C_5.
$$
 If we replace $\p(t)$ with $\p(t-t')$ with $t'\in\R$ then by \eqref{p2} we have to replace $\hat\p^c(z)$ with  $e^{-2\pi it'z}\hat\p^c(z)$. By \eqref{p7},  we obtain
$$
   \nu[t'-1,t'+1]\le C_5^2e^{4\pi|t'|H}.
$$
We have proved \eqref{pr}. Each segment $[-r,r]$ can be covered by $r+1$ intervals of the form $(t-1,t+1)$, therefore,
$$
\nu[-r,r]\le C_5^2(r+1)e^{4\pi rH},
$$
and
$$
\limsup_{r\to\infty}r^{-1}\log\nu(-r,r)\le 4\pi H.\qquad\Box
$$
\bigskip

{\bf Proof of Theorem \ref{T3}}.  By Theorem \ref{T1}, it is enough to prove  Theorem \ref{T3} for  translation bounded measures $\mu$.

Let $\p\in\D$ be such that $0\le\p(t)\le1$, $\supp\p\subset (-\eta,\eta)$, and $\p(t)\equiv1$ for $|t|<\eta/2$, and $C_5$ is the constant from \eqref{p7} that corresponds to this $\p$.
Suppose that \eqref{pr1} does not valid. Then there are  intervals $I_n=(t_n-\eta/2,t_n+\eta/2)$ such that $t_n\to\infty$ and
$$
  \sum_{\g\in\G\cap I_n}|b_\g|= |\hat\mu^c|(I_n)>5C_5e^{2\pi H|t_n|}.
$$
 If the set $\G\cap (t_n-\eta,t_n+\eta)$ is finite for some $n\in\N$, then we set $A_n=\G\cap (t_n-\eta,t_n+\eta)$. Otherwise, taking into account that
$\hat\mu$ is a measure and $|\hat\mu|(t_n-\eta,t_n+\eta)<\infty$, we
choose $A_n$ to be a finite subset of $\G\cap (t_n-\eta,t_n+\eta)$ such that
\begin{equation}\label{sm}
  \sum_{\g\in\G\cap (t_n-\eta,t_n+\eta)\setminus A_n}|b_{\g}|<C_5e^{2\pi H|t_n|}.
\end{equation}
With this setting, we have
$$
 \sum_{\g\in A_n\cap I_n}|b_{\g}|>4C_5e^{2\pi H|t_n|}.
$$
Using the Kroneker lemma, we can find $s_n\in\R$ and  $m_\g^{(n)}\in\Z$ such that for all $\g\in A_n$
$$
|s_n\g+\frac{\arg b_{\g}}{2\pi}-m_\g^{(n)}|<\frac{1}{6}.
$$
Therefore, for all $\g\in A_n$ we have
$$
  \Re e^{2\pi is_n\g}b_\g=|b_\g|\cos(2\pi s_n\g+\arg b_\g)> \frac{|b_\g|}{2} >0,
$$
and
\begin{equation}\label{arg}
\Re\sum_{\g\in A_n\cap I_n}e^{2\pi is_n\g}b_\g>2C_5e^{2\pi H|t_n|}.
\end{equation}
Let $\p\in\D$ be the function defined at the beginning of the proof.
It follows from \eqref{sm}, \eqref{arg} and the inequality $\Re e^{2\pi is_n\g}b_\g>0$ for all $\g\in A_n$ that
\begin{multline}\label{c1}
   \left|\int e^{2\pi is_nt}\p(t-t_n)\hat\mu^c(dt)\right| \ge\left|\sum_{\g\in A_n}\p(\g-t_n)e^{2\pi is_n\g}b_\g\right|-\sum_{\g\in\G\cap(t_n-\eta,t_n+\eta)\setminus A_n}|b_{\g}|\\
   \ge \Re\sum_{\g\in A_n\cap I_n}e^{2\pi is_n\g}b_\g+
   \Re\sum_{\g\in A_n\setminus I_n}\p(\g-t_n)e^{2\pi is_n\g}b_\g-C_5e^{2\pi H|t_n|}> C_5e^{2\pi H|t_n|}.
\end{multline}
 The c-Fourier transform of the function $ e^{2\pi is_n t}\p(t-t_n)$ equals  $e^{-2\pi it_n(z-s_n)}\hat\p^c(z-s_n)$. Hence, by definition of the c-Fourier transform of the measure,
$$
   \int_\R e^{2\pi is_nt}\p(t-t_n)\hat\mu^c(dt)=
     \int_{\SH} e^{-2\pi it_n(z-s_n)}\hat\p^c(z-s_n)\mu(dz) =   \int_{\SH} e^{-2\pi it_nz}\hat\p^c(z)\mu_{-s_n}(dz).
$$
By  \eqref{p7}, the modulus of the last integral does not exceed $C_5 e^{2\pi|t_n|H}$. This bound  contradicts to what is obtained in \eqref{c1}, hence \eqref{pr1} is true.
 Arguing as in the proof of Theorem \ref{T2}, we obtain that the measure $\hat\mu^c$ has the type  with respect to the exponential growth at most $2\pi H$. \qquad $\Box$


\bigskip
\begin{thebibliography}{19}


\bibitem{AKV} Alon, L., Kummer, M., Kurasov, P., Vinzant, C. \emph{Higher dimensional Fourier Quasicrystals from
Lee-Yang varieties,}  Invent. Math. {\bf 239} (2025), 321--376. https://doi.org/10.1007/s00222-024-01307-8

\bibitem{B} Bohr, H. Almost Periodic Functions, ed. Chelsea, New-York, 1951, 114pp. https://www.amazon.co.uk/Almost-Periodic-Functions-Chelsea-Publishing/dp/082840027X

\bibitem{BF} Boyvalenkov, P., Favorov, S.Yu. \emph{Growth of masses of crystalline measures,}(2025) https://arXiv.org/pdf/2503.19567

\bibitem{F5} Favorov, S.Yu. \emph{The crystalline measure that is not a Fourier Quasicrystal,}  Analysis Mathematica {\bf 50} (2024), 455--462. https://DOI:10.1007/s10476-024-00031-y

\bibitem{F6} Favorov, S.Yu. \emph{Non-negative crystalline and Poisson measures in the Euclidean space,} Studia Mathematica {\bf 278} (2024), 81--98. https://DOI: 10.4064/sm240507-2-8

\bibitem{F7} Favorov, S.Yu. \emph{Analogues of Fourier quasicrystals for a strip,}  Analysis Mathematica, {\bf 51}, no.2, 457-475 https://doi.org/10.1007/s10476-024-00089-2

 \bibitem{F9} Favorov, S.Yu. \emph{Application of methods of quasicrystals theory to entire functions of exponential growth,}  Anal. Math. {\bf 51}, no.4 (2025), 1313-1325 https://doi.org/10.1007/s10476-025-00106-

\bibitem{G} Goncalves, F. \emph{A classification of Fourier summation formulas and crystalline measures,} https://arXiv.org/pdf/2312.11185

\bibitem{GV} Goncalves, F.,  Vedana, G. \emph{A complete classification of Fourier summation formulas on the real line,}  (2023), https://arxiv.org/pdf/2504.02741

\bibitem{KS} Kurasov, P., Sarnak, P. \emph{Stable polynomials and crystalline measures,} J. Math. Phys. {\bf 61} no.8  (2020), https://doi.org/10.1063/5.0012286

\bibitem{LT} Lawton, W. M., Tsikh, A. K. \emph{Fourier quasicrystals on Rn,} The Journal of Geometric Analysis  (2025) 35-93,  https://doi.org/10.1007/s12220-025-01911-x

\bibitem{L} Levin, B.Ja. Lections on Entire Functions. Transl. of Math. Monograph, MMONO/150, AMS Providence, R1, 1996, 248pp.   https://bookstore.ams.org/view?ProductCode=MMONO/150

\bibitem{Le} Levitan, B.M. Almost Periodic Functions. Gostehizdat, Moscow. 1953, 396pp. (In Russian).

\bibitem{M3} Meyer, Y. \emph{Global and local estimates on trigonometric sums,} Trans. R. Norw. Soc. Sci. Lett. no.2 (2018), 1--25. https://www.dknvs.no/wp-content/uploads/2018/12/Skrifter-2-2018-Meyer-Scr.pdf

\bibitem{M4} Meyer, Y. \emph{Multidimensional crystalline measures,} Trans. R. Norw. Soc. Sci. Lett. no.1 (2023), 1--24. https://www.dknvs.no/wp-content/uploads/2023/07/Skrifter-01-2023-Yves-Meyer.pdf

\bibitem{OU} Olevskii, A., Ulanovskii A. \emph{Fourier quasicrystals with unit masses,} Comptes Rendus Mathematique, {\bf 358} (2020), 1207--1211.https://doi.org/10.5802/crmath.142Harmonic Analysis

\bibitem{OU1} Olevskii, A., Ulanovskii A. \emph{A simple crystalline measure,} https://arXiv.org/pdf/2006.12037

\bibitem{RV} Radchenko, D., Viazovska, M. \emph{Fourier interpolation on the real line,} Publ. Math. IHES {\bf 129} (2019), 51--81. https://doi.org/10.1007/s10240-018-0101-z

\end{thebibliography}
\end{document}